\title{The Dixmier-Malliavin Theorem and Bornological Vector Spaces}
\author{Gal Dor\\
Tel-Aviv University}
\date{January 2020}
\begin{document}

\maketitle

\begin{abstract}
    This note is intended to reformulate the Dixmier-Malliavin theorem about smooth group representations in the language of bornological vector spaces, instead of topological vector spaces. This language turns out to allow a more general theorem to be proven, and we are able to use it to strengthen a result of Meyer from \cite{borno_quasi_unital_algs2}.
    
    This paper is based on a part of the author's thesis \cite{alg_struct_L_funcs}.
\end{abstract}

\tableofcontents

\section{Introduction}

In this text, we will discuss bornological vector spaces and prove a variant of a theorem by Dixmier-Malliavin (see \cite{schwartz_is_quasi_unital} for a clean overview of the original). Along the way, we will discuss the notion of quasi-unital algebra, which frequently comes up in the representation theory of locally compact groups.

The first of these topics, bornological vector spaces, will be used in much of this text as a substitute for topological vector spaces. The two notions are very closely related (to the point that in many applications, they are indistinguishable), but sometimes that leads to theorems that should be stated in one language being stated in the other. As a result, some theorems gain spurious technical requirements that disappear once they are stated correctly. It seems that many theorems in representation theory are much cleaner when stated in the language of bornological vector spaces instead of topological vector spaces. As a result, this text was written with a hidden agenda in mind: help push more people working with representation theory into using this language.

\begin{remark}
    It is fairly common to need to place some subcategory of topological vector spaces into a ``nice'' categorical framework (usually, this means a locally presentable category). Every application has a different suitable choice of framework. For example, Scholze works with \emph{condensed sets} in \cite{condensed_sets}, a construction inspired by the pro-\'etale site.
    
    It seems to the author that bornological vector spaces are useful in the context of representation theory beyond simply being a replacement for the category of topological vector spaces with nicer formal properties. This case is heavily advocated by Meyer, e.g. in \cite{borno_quasi_unital_algs2,borno_quasi_unital_algs,borno_schwartz_padic,borno_vs_topo_analysis}. However, most of his motivation comes from the world of non-commutative geometry and homological algebra. The current text is focused on more classical motivations, by showing that the Dixmier-Malliavin theorem has a distinctly bornological ``flavor''.
\end{remark}

This brings us to the second topic we discuss in this paper: the Dixmier-Malliavin theorem. The content of our version of this theorem is that the G\r{a}rding space of a complete bornological representation is a smooth $C_c^\infty(G)$-module. Classically, as it is formulated in \cite{schwartz_is_quasi_unital}, the theorem is restricted to Fr\'echet topological representations, which are a full subcategory of the bornological representations.

Much of this text does not constitute original contribution. Certainly, everything we have to say about bornological vector spaces or spaces of smooth functions is already known, and is included here only as a reference. Many of our claims about quasi-unital algebras already appear in \cite{self_induced_algs}. Our contribution to Dixmier-Malliavin is its reformulation in terms of bornological vector spaces, which results in a cleaner and more categorical ``flavor''.

Let us give some more details. Let $G$ be a real Lie group. Recall that $C_c^\infty(G)$ is the (bornological) space of smooth and compactly supported functions on $G$, and that convolution turns it into a ring. In \cite{borno_quasi_unital_algs2}, Meyer proves an equivalence between two categories:
\[\xymatrix{
    \{\text{smooth $G$-modules}\} \ar@{<->}[r] & \{\text{essential $C_c^\infty(G)$-modules}\}.
}\]

On the left hand side of the equivalence, is the category of complete bornological $G$-modules, where the action of $G$ is \emph{smooth} (i.e., its G\r{a}rding space is equal to itself, see also Definition~\ref{def:smooth_G_module}). On the right hand side of Meyer's equivalence is the full subcategory of complete bornological $C_c^\infty(G)$-modules $M$ satisfying that the canonical morphism:
\begin{equation*} \label{eq:meyer_smoothness}
    C_c^\infty(G)\widehat{\otimes}_{C_c^\infty(G)}M\ra M
\end{equation*}
is an isomorphism. Here, $\widehat{\otimes}$ is the completion of the relative (projective) tensor product. Meyer refers to this category as the category of \emph{essential $C_c^\infty(G)$-modules}.

Note that Meyer's notion of essential module makes sense because Meyer shows that the ring $C_c^\infty(G)$ satisfies this property over itself, i.e.:
\[
    C_c^\infty(G)\widehat{\otimes}_{C_c^\infty(G)}C_c^\infty(G)\xrightarrow{\sim}C_c^\infty(G).
\]

We will prove a strengthening of this statement (although, for simplicity, we consider less general groups $G$ compared to \cite{borno_quasi_unital_algs2}). Specifically, we will show that the use of the completed tensor product is unnecessary. More explicitly, all essential $C_c^\infty(G)$-modules $M$ also satisfy that the morphism
\[
    C_c^\infty(G)\otimes_{C_c^\infty(G)}M\ra M
\]
is already an isomorphism  of bornological vector spaces -- that is, no completion is necessary. We will refer to this property as \emph{smoothness} of a $C_c^\infty(G)$-module. Once again, this property will make sense because we will show that $C_c^\infty(G)$ is in fact \emph{quasi-unital}, i.e.:
\[
    C_c^\infty(G)\otimes_{C_c^\infty(G)}C_c^\infty(G)\xrightarrow{\sim}C_c^\infty(G).
\]
For more on our definitions of quasi-unitality and smoothness, we refer the reader to Section~\ref{sect:quasi_unital}. Note that our terminology differs somewhat from that of \cite{self_induced_algs}.

In short, we claim that essential $C_c^\infty(G)$-modules are automatically smooth $C_c^\infty(G)$-modules, and establish an equivalence:
\begin{equation} \label{eq:smooth_is_smooth} \xymatrix{
    \{\text{smooth $G$-modules}\} \ar@{<->}[r] & \{\text{smooth $C_c^\infty(G)$-modules}\}.
}\end{equation}

In order to prove this remarkable property, we will use a strengthened version of the theorem of Dixmier-Malliavin. Specifically, we will show that the G\r{a}rding functor defined in Section~\ref{sect:garding_functor} defines an essential $C_c^\infty(G)$-module which is also a smooth $C_c^\infty(G)$-module. This will also allow us to prove some useful corollaries.
\begin{remark}
    Our notion of smoothness is such that a smooth bornological $C_c^\infty(G)$-module always satisfies that
    \[
        C_c^\infty(G)\otimes_{C_c^\infty(G)}M\ra M
    \]
    is an isomorphism of vector spaces (without any topology or bornology). Thus, our version of the Dixmier-Malliavin theorem implies the classical one.
\end{remark}

The structure of this text is as follows. In Section~\ref{sect:bornological_vect}, we will recall the notion of a bornological vector space and its basic properties. Additionally, we will linger on the comparison between bornological and topological vector spaces. In Section~\ref{sect:quasi_unital}, we will recall the notions of a quasi-unital algebra and its smooth modules. In Section~\ref{sect:garding_functor}, we will define the G\r{a}rding functor on bornological representations, which will allow us to define the notion of a smooth $G$-module. Armed with a definition for the two categories involved in the equivalence \eqref{eq:smooth_is_smooth}, in Section~\ref{sect:dixmier_malliavin} we will finally prove the desired equivalence, and establish the Dixmier-Malliavin theorem in the bornological setting. In Appendix~\ref{appendix:ext_of_quasi_unital}, we will prove a result from Section~\ref{sect:quasi_unital} that is not needed for the proof of our main theorem.

\date{\textbf{Acknowledgements: }The author would like to thank his advisor, Joseph Bernstein, for pointing out the Dixmier-Malliavin theorem to him, and his great help in refining this paper. The author would also like to thank Shachar Carmeli for many fruitful discussions about the category of bornological vector spaces.}

\section{Bornological Vector Spaces} \label{sect:bornological_vect}

In this section, we will introduce some background knowledge on bornological vector spaces for the reader who is not necessarily familiar with their use, and try to motivate their relevance to our work. This section should not be considered original contribution.

This section is far too short to be a comprehensive introduction to the theory of bornological vector spaces. However, the author hopes to emphasize both the added convenience that they give over using topological vector spaces, and their intuitive sense.

We will begin with a general background on bornological vector spaces and their comparison with topological vector spaces. Afterwards, we will discuss bornological \emph{modules}, and show some of their advantages over topological modules.

\begin{warning}
    The reader should beware that in this text, we are not automatically assuming that our bornological vector spaces are complete. Likewise, we will take care to distinguish between the (incomplete) tensor product $\otimes$, and the completed tensor product $\widehat{\otimes}$.
\end{warning}

\subsection{Background}

Generally speaking, just like a topological space specifies which of its subsets is \emph{open}, a bornological space is supposed to specify which of its subsets is \emph{bounded}. While in most circumstances topological and bornological spaces are fairly different notions, their specializations to the case of vector spaces are more closely related than might be expected.

Let us begin by introducing some notation.
\begin{definition}
    We denote:
    \begin{enumerate}
        \item By $\Vect$ the category of vector spaces over $\CC$.
        \item By $\SNorm$ the category of semi-normed topological vector spaces over $\CC$.
        \item By $\Top$ the category of locally convex topological vector spaces over $\CC$.
        \item By $\Born$ the category of bornological vector spaces, with convex vector bornologies over $\CC$.
    \end{enumerate}
    From now on, we will simply use the terms \emph{topological vector space} and \emph{bornological vector space} to refer to objects of $\Top$ and $\Born$, respectively.
\end{definition}

\begin{remark}
    Maps $f\co W\ra W'$ in $\Top$ are always \emph{continuous}: the inverse image $f^{-1}(U)$ of an open subset $U\subset W'$ is open in $W$. Likewise, maps $f\co V\ra V'$ in $\Born$ are always \emph{bounded}: the image $f(T)$ of a bounded subset $T\subseteq V$ is bounded in $V'$.
\end{remark}

\begin{remark}
    Just like a topological vector space can be described by a generating family of absolutely convex open subsets, a bornological vector space can be described by a generating family of bounded subsets.
    
    More explicitly, let $V$ be a vector space, and $T$ an absolutely convex subset. Then $T$ defines a semi-norm $\norm{\cdot}_T$ on its span $V_T$.
    
    Given a vector space $V$, one can specify a bornology on it as follows. Consider a collection $\mathcal{T}$ of absolutely convex subsets of $V$, satisfying that:
    \begin{enumerate}
        \item The vector spaces $V_T$ with $T\in\mathcal{T}$ exhaust $V$.
        \item For every finite $\{T_1,\dots,T_n\}\subset\mathcal{T}$, there is some $T\in\mathcal{T}$ such that $T_i$ is bounded in $V_T$ for all $i$.
    \end{enumerate}
    The \emph{bornology on $V$ generated by $\mathcal{T}$} is specified by saying that a subset $T'\subseteq V$ is bounded if and only if it is bounded in some $V_T$, for $T\in\mathcal{T}$ in this collection. We say that $\mathcal{T}$ is a \emph{generating collection of bounded subsets of $V$} for this bornology. Every convex vector bornology can be obtained in this way.
\end{remark}

Recall that $V\in\Born$ is said to be \emph{complete} if each $V_T$ is complete, as $T$ goes over some generating collection of bounded subsets. If $V$ is complete, then the space $V_T$ is automatically complete for all absolutely convex bounded subsets $T$. A space $V\in\Born$ is called \emph{separated} if all $V_T$ are Hausdorff. A bornological vector space $V$ has a canonical completion $\widehat{V}$, given by the colimit
\[
    \widehat{V}=\colim_T\widehat{V_T}
\]
of the completions of the semi-normed spaces $V_T$.

There are standard fully faithful embeddings
\begin{align*}
    \Top & \subseteq\Pro(\SNorm) \\
    \Born & \subseteq\Ind(\SNorm),
\end{align*}
induced via adjunction from the inclusions $\SNorm\subseteq\Top$ and $\SNorm\subseteq\Born$.

\begin{remark}
    To be as explicit as possible, let us describe these embeddings and their essential images.
    
    The embedding $\Top\subseteq\Pro(\SNorm)$ is given by sending a topological vector spaces to the pro-system given by a generating system of semi-norms. The essential image of $\Top$ consists precisely of those pro-systems of semi-normed vector spaces whose underlying pro-systems of vector spaces are constant.
    
    Similarly, the embedding $\Born\subseteq\Ind(\SNorm)$ is given by sending $V\in\Born$ to the ind-system $\{V_T\}$, as $T$ goes over a generating collection of bounded subsets for $V$. The essential image of $\Born$ consists precisely of those ind-systems of semi-normed vector spaces whose underlying ind-systems of vector spaces can be chosen to have injective transition maps.
\end{remark}

Now, the canonical adjunction between $\Ind(\SNorm)$ and $\Pro(\SNorm)$ restricts to an adjoint pair
\[\xymatrix{
    F\co\Born \ar@<3pt>[r] & \Top\co G. \ar@<3pt>[l]
}\]

\begin{remark}
    Consider a topological vector space $W\in\Top$. The corresponding bornological space $G(W)$ has the same underlying vector space, equipped with the \emph{von-Neumann bornology}: a set $T\subseteq G(W)$ is bounded if it is bounded in all of the semi-norms of $W$.
    
    Similarly, for a bornological vector space $V\in\Born$, the corresponding topological vector space $F(V)$ has the same underlying vector space, equipped with the \emph{bornivorous topology}: a subset $U\subseteq F(V)$ is open if it contains a scalar multiple of every bounded set of $V$.
\end{remark}

\begin{remark}
    A subspace $V'\subseteq V$ of a bornological vector space $V$ acquires a subspace bornology. If $V$ is complete, then $V'$ is also complete if and only if it is closed in the bornivorous topology on $V$.
\end{remark}

\begin{remark}
    Let $W\in\Top$, and suppose that the bornivorous topology of the von-Neumann bornology of $W$ is once again the original topology. In other words, suppose that the co-unit
    \[
        FG(W)\ra W
    \]
    is an isomorphism. In such a case, $W$ is sometimes referred to as a \emph{bornological topological vector space}. We will avoid using this term, as it might be very confusing.
\end{remark}

Regardless, there is a large fully faithful subcategory on both sides of the adjunction
\[
    \xymatrix{\Born \ar@<3pt>[r] & \Top \ar@<3pt>[l]}
\]
on which the adjunction defines an equivalence. Specifically, all \emph{metrizable} topological vector spaces (those whose topology is generated by countably many semi-norms) satisfy this property. I.e., metrizable topological vector spaces can be thought of as bornological vector spaces with no risk.

Because many interesting topological vector spaces encountered in practice are metrizable, it is easy to mistakenly try to generalize results that are actually about bornological vector spaces to results about topological vector spaces. This is exacerbated by the fact that while topological vector spaces are much more commonly used, bornological vector spaces are much better behaved. The representation theory of locally compact groups is one area where this phenomenon is noticeable. We simply reference the reader to \cite{borno_quasi_unital_algs2}, and possibly \cite{borno_quasi_unital_algs}, to illustrate this point.

\subsection{Tensor Products and Modules}

Having discussed the basic properties of bornological vector spaces, we wish to discuss the properties of bornological \emph{modules}, which are one side of the categorical equivalence~\eqref{eq:smooth_is_smooth}, the main goal of this paper.

In order to have a notion of module in $\Born$, we first need a notion of tensor product of bornological vector spaces. In this text, we will exclusively use the \emph{projective} tensor product for bornological vector spaces. Recall that for $V,V'\in\Born$, the projective tensor product $V\otimes V'$ is a bornological vector space, whose underlying vector space is the tensor product $V\otimes_\Vect V'$ of the underlying vector spaces of $V$ and $V'$, and whose bornology is as follows. The bornology of $V\otimes V'$ is generated by the absolute convex hulls of the subsets of the form $T\otimes T'$, where $T,T'$ go over a generating collection of bounded subsets for $V,V'$, respectively.

\begin{remark}
    If $V,V'\in\Born$ are complete, then it also makes sense to consider the bornological space $V\widehat{\otimes}V'$, which is defined as the completion of $V\otimes V'$. Note that the underlying vector space of $V\widehat{\otimes}V'$ is no longer the same as the tensor product of the underlying vector spaces of $V$ and $V'$.
\end{remark}

The tensor product $\otimes$ turns $\Born$ into a closed, unital symmetric monoidal category. In particular, the functor $\otimes$ respects colimits in both arguments, and $\Born$ has an internal $\Hom$ functor. The same is simply not true for $\Top$.
\begin{remark}
    The internal $\Hom$ functor on bornological vector spaces can also be described explicitly. If $V,V'\in\Born$, then the space $\Hom(V,V')$ of bounded maps can be assigned a bornology as follows. A subset $S\subseteq\Hom(V,V')$ is bounded if and only if for all bounded subsets $T\subseteq V$, there exists a bounded subset $T'\subseteq V'$, such that all maps in $S$ map $T$ into $T'$. This is called the \emph{equibounded} bornology. It is easy to verify that the equibounded bornology turns $\Hom(V,-)$ into a right adjoint $\sHom(V,-)$ to $-\otimes V$.
\end{remark}

In this text, rings are not necessarily commutative or unital. A particular consequence of the above is that categories of modules of rings in $\Born$ are well-behaved. That is, let $R$ be a (non-unital) ring in $\Born$. There is a category $\Mod(R)$, and a forgetful functor
\[
    \Mod(R)\ra\Born.
\]
This functor automatically respects limits, being a right adjoint. However, because $\Born$ is closed monoidal, it also respects colimits. In particular, one can compute direct limits of bornological modules at the level of underlying bornological vector spaces.

\begin{remark}
    Equip $\Top$ with its own projective tensor product. The functor
    \[
        \xymatrix{\Born \ar[r] & \Top}
    \]
    from the previous subsection, as well as the forgetful functor $\Born\ra\Vect$, both respect the symmetric monoidal structure of $\Born$. In particular, a bornological ring $R$ is also a topological ring via the bornivorous topology, and also simply a ring over $\CC$. Likewise, a bornological module over $R$ is also a topological module.
\end{remark}

On the other side of the categorical equivalence~\eqref{eq:smooth_is_smooth}, we will be dealing with representations of groups. Let $G$ be any (possibly discrete) group. We will say that a \emph{bornological group representation} $(\pi,V)$ of the group $G$ (also referred to as a \emph{$G$-module}) is a bornological vector space $V\in\Born$ along with a map $\pi\co G\ra\Aut(V)$. This means that $G$ acts on $V$ via bounded maps. Of course, when $G$ is actually a Lie group, it is senseless to consider representations without some kind of continuity or smoothness requirement on the dependence of $\pi$ on $G$. We will have an in-depth discussion of what it means for a representation of a Lie group to be smooth in Section~\ref{sect:garding_functor} below.

\section{Quasi-Unital Rings} \label{sect:quasi_unital}

The goal of this section is to collect some basic information about quasi-unital rings. In practice, we will only ever be using the notions below in the setting of bornological vector spaces. However, when trying to establish intuition, it is useful to consider them in the simpler setting of vector spaces (without any topology or bornology). As a result, we will develop the theory for the categories $\Vect$ (of vector spaces over $\CC$) and $\Born$ (of bornological vector spaces) simultaneously.

In any case, the two theories are very nearly identical, the biggest difference being the potential use of a non-trivial exact structure on the category of bornological vector spaces. The reader who is not concerned with such issues can skip Subsection~\ref{subsect:quasi_unital_born} below entirely.

For a more detailed overview of this notion, see Meyer's work in \cite{self_induced_algs}. The reader should beware that Meyer only considers \emph{complete} bornological vector spaces, and thus takes their completed tensor products. This causes some differences in terminology.

\subsection{Definition of Quasi-Unital Rings in \texorpdfstring{$\Vect$}{Vect}} \label{subsect:quasi_unital_vect}

Recall that in this text, rings are not necessarily unital or commutative. Our basic objects of study for this section are quasi-unital rings and their smooth modules. Let us begin by introducing this notion for vector spaces.

Recall that if $R$ is a (non-unital) ring, and $M,N$ are right and left $R$-modules respectively, then the \emph{relative tensor product} of $M$ and $N$ over $R$ is denoted by $M\otimes_R N$ and defined as the quotient
\[\xymatrix{
    M\otimes R\otimes N \ar@<2pt>[r] \ar@<-2pt>[r] & M\otimes N.
}\]
\begin{definition} \label{def:ring_is_quasi_unital}
    A (non-unital) ring $R$ in $\Vect$ is called \emph{quasi-unital} if the canonical morphism
    \begin{equation*}
        R\otimes_R R\ra R
    \end{equation*}
    is an isomorphism.
\end{definition}

\begin{definition} \label{def:smooth_modules}
    Let $R$ be a quasi-unital ring in $\Vect$. A left $R$-module is called \emph{smooth}, if
    \[
        R\otimes_R M\ra M
    \]
    is an isomorphism. We use similar definitions for right $R$-modules and $R$-bi-modules.
\end{definition}

\begin{remark}
    Note that the terms ``approximate identity'' and ``quasi-unital algebra'' are used in the literature with similar, but not identical meanings to our own, cf. \cite{borno_quasi_unital_algs,borno_quasi_unital_algs2}. Also note that this meaning is very different from the use of the term ``quasi-unital algebra'' in the context of higher algebra, where it is used to refer to an algebra whose unit carries reduced coherence data. Our meaning is closer to the term ``self-induced algebra'' used by Meyer in \cite{self_induced_algs}, although Meyer uses the completed tensor product for bornological algebras, which we do not.
\end{remark}

Let us begin by giving some examples.

\begin{example} \label{example:unital_means_quasi_unital}
    All unital rings are quasi-unital. In fact, having a one-sided unit is enough for a ring to be quasi-unital. Indeed, it is easy to find a splitting for the complex
    \[\xymatrix{
        \dots \ar[r] & R\otimes R\otimes R \ar[r] & R\otimes R \ar[r] & R \ar[r] & 0
    }\]
    in this case.
\end{example}

\begin{example} \label{example:quasi_unital_ring}
    Let
    \[
        R=\CC\left[t^\alpha\suchthat 0<\alpha\in\RR\right]
    \]
    be the commutative ring of expressions of the form
    \[
        \sum_{i=0}^N a_i t^{\alpha_i}
    \]
    for real $\alpha_i>0$. It is immediate to see that $R$ is quasi-unital.
\end{example}

\begin{example} \label{example:smooth_module}
    Continuing Example~\ref{example:quasi_unital_ring}, we see that the $R$-module $M_{(0,1]}$ given by the quotient
    \[
        M_{(0,1]}=R/tR
    \]
    is smooth. This module is spanned by powers $t^\alpha$, where $\alpha\in(0,1]$.
\end{example}

\begin{example} \label{example:non_smooth_module}
    Let us give two examples for non-smooth modules. Let $R$ be as in Example~\ref{example:quasi_unital_ring}. We define two $R$-modules $M_{[0,\infty)}$ and $M_{(0,1)}$ by:
    \begin{align*}
        M_{[0,\infty)} & =\CC\cdot 1\oplus R, \\
        M_{(0,1)} & =R/(\CC t+Rt)=R/tM_{[0,\infty)}.
    \end{align*}
    That is, $M_{[0,\infty)}$ is given by adjoining a unit to $R$, and $M_{(0,1)}$ is the maximal quotient of $R$ where $t=0$. They are spanned by powers $t^\alpha$, where $\alpha\in[0,\infty)$ or $\alpha\in(0,1)$, respectively.
    
    Now, the map
    \[
        R\otimes_R M_{[0,\infty)}=R\ra M_{[0,\infty)}
    \]
    is injective but not surjective, meaning that $M_{[0,\infty)}$ is not smooth. Similarly,
    \[
        R\otimes_R M_{(0,1)}\ra M_{(0,1)}
    \]
    is surjective but not injective. For example, the element $t^{1/2}\otimes t^{1/2}$ lies in its kernel. In particular, $M_{(0,1)}$ is not smooth either.
\end{example}

\subsection{Definition of Quasi-Unital Rings in \texorpdfstring{$\Born$}{Born}} \label{subsect:quasi_unital_born}

Let us now address the issue of bornologies. The main reason why we cannot simply re-use the definition above is as follows. In many contexts, the notion of quotient in the category $\Born$ is a bit too weak. This means that although
\[\xymatrix{
    R\otimes R\otimes R \ar@<2pt>[r] \ar@<-2pt>[r] & R\otimes R \ar[r] & R
}\]
is a colimit diagram, one might want to further require that $R\otimes R\ra R$ be an admissible epimorphism, for some exact structure on $\Born$.

The reader who does not care about such issues may take Definitions~\ref{def:ring_is_quasi_unital} and~\ref{def:smooth_modules} to apply verbatim to bornological rings as well, and skip the rest of this subsection. All theorems and statements below will still hold. However, this would result in a slightly incorrect notion. The rest of this subsection contains a detailed discussion of this issue.

To begin with, the reader should be aware that choosing the right notion of admissible epimorphism for bornological vector spaces seems to be subtle, with conflicting choices made in the literature. Compare, for example, \cite{derived_borno_spaces} (which uses \emph{refined cohomology} to test for exactness) and \cite{borno_quasi_unital_algs} (which requires that an exact sequence of bornological vector spaces be \emph{split}).

In this text, we will make the minimally powerful choice under which our main theorem is still true. To be specific, we will exclusively use the locally split exact structure for bornological vector spaces for the rest of this text:
\begin{definition}
    Let $\pi\co V\ra V'$ be a map of bornological vector spaces. We will say that $\pi$ is \emph{locally split} if for all bounded subsets $T\subseteq V'$, there exists a bounded linear section $s_T\co V'_T\ra V$:
    \[\xymatrix{
        V \ar[r]^\pi & V' \\
        & V'_T. \ar@{^{(}->}[u] \ar@{-->}[ul]^{s_T}
    }\]
\end{definition}
\begin{remark}
    Note that we require no compatibility between sections $s_T$ corresponding to different bounded subsets $T\subseteq V'$. In particular, there might not exist a section to $\pi$ itself.
\end{remark}

\begin{definition} \label{def:locally_split_exact}
    More generally, we will say that a complex
    \[\xymatrix{
        \dots \ar[r]^{d^{-1}} & V^0 \ar[r]^{d^0} & V^1 \ar[r]^{d^1} & \dots
    }\]
    of bornological vector spaces is \emph{exact} at $V^i$ if for every bounded subset $T\subseteq\ker(d^i)$, there exists a section $s^i_T\co\ker(d^i)_T\ra V^{i-1}$ on the span of $T$ making the following diagram commute:
    \[\xymatrix{
        V^{i-1} \ar[r]^{d^{i-1}} & V^i \\
        & \ker(d^i)_T. \ar@{^{(}->}[u] \ar@{-->}[ul]^{s^i_T}
    }\]
\end{definition}

\begin{warning}
    Note that if $\xymatrix@1{0 \ar[r] & V' \ar[r] & V \ar[r] & V'' \ar[r] & 0}$ is a short exact sequence of bornological vector spaces, then their underlying vector spaces also form a short exact sequence. This might subvert the expectations of some readers who are used to working with complete topological vector spaces, where the implication is typically the other way around.
\end{warning}

We can now state our new definitions:
\begin{definition}
    A bornological ring $R$ is called \emph{quasi-unital} if the canonical complex
    \[\xymatrix{
        \dots \ar[r] & R\otimes R\otimes R \ar[r] & R\otimes R \ar[r] & R \ar[r] & 0
    }\]
    is exact at $R$ and $R\otimes R$, in the sense of Definition~\ref{def:locally_split_exact}.
\end{definition}

\begin{definition} \label{def:born_smooth_modules}
    Let $R$ be a quasi-unital bornological ring. A bornological left $R$-module is called \emph{smooth}, if the canonical complex
    \[\xymatrix{
        \dots \ar[r] & R\otimes R\otimes M \ar[r] & R\otimes M \ar[r] & M \ar[r] & 0
    }\]
    is exact at $M$ and $R\otimes M$. We use similar definitions for right $R$-modules and $R$-bi-modules.
\end{definition}

\begin{remark}
    The functors $\Born\ra\Top$ and $\Born\ra\Vect$ respect colimits. Hence, rings $R$ that are quasi-unital as bornological rings are also quasi-unital as rings in $\Top$ or $\Vect$, and the same applies to smooth modules.
    
    This allows one to deduce the classical Dixmier-Malliavin theorem from our bornological version, Theorem~\ref{thm:garding_is_smooth}.
\end{remark}

\begin{remark}
    Using incomplete tensor products in Definition~\ref{def:born_smooth_modules} makes a big difference; if $M$ is a smooth bornological $R$-module, then every element $m\in M$ can be written down as a finite sum $\sum_i r_i\cdot m_i$, for $r_i\in R$ and $m_i\in M$. Had we been using the completed tensor product $\widehat{\otimes}$ instead, we would not necessarily have had this property.
\end{remark}

\subsection{Properties of Quasi-Unital Rings}

We now return to our discussion of quasi-unital rings, and their properties. For the rest of this subsection, all rings and modules are bornological, although we make no use of this fact.

Let us recall our main motivation.
\begin{example} \label{example:smooth_compact_funcs_are_quasi_unital}
    The main point of this text is that many rings and modules naturally appearing in representation theory are quasi-unital and smooth, respectively. For example, for a real Lie group $G$, the ring $C_c^\infty(G)$ of smooth and compactly supported functions on $G$, equipped with the convolution product, is a quasi-unital ring. However, it is generally not unital unless $G$ is discrete.
    
    Similarly, if $G$ acts smoothly on a space $X$, then the space $C_c^\infty(X)$ becomes a smooth module over $C_c^\infty(G)$. Both of these statements will follow from Theorem~\ref{thm:garding_is_smooth} below.
\end{example}

\begin{definition} \label{def:smoothening_roughening}
    Let $R$ be a quasi-unital bornological ring. We use the notation $\Mod^\text{sm}(R)$ to denote the full subcategory of smooth left $R$-modules inside the category $\Mod(R)$ of bornological left $R$-modules. The inclusion $\Mod^\text{sm}(R)\subseteq\Mod(R)$ has a right adjoint: 
    \begin{align*}
        \Mod(R) & \ra \Mod^\text{sm}(R) \\
        M & \mapsto R\otimes_R M
    \end{align*}
    called the \emph{smoothening} of $M$.
\end{definition}

\begin{remark}
    The smoothening functor $M\mapsto R\otimes_R M$ itself has a right adjoint, given by the internal $\Hom$ space $M\mapsto\sHom_R(R,M)$. We will refer to this as the \emph{roughening} of $M$.
\end{remark}

\begin{remark} \label{remark:bmod_is_unital}
    Note that the category $\BMod^\text{sm}(R)$ of bornological $R$-bi-modules that are smooth on both sides is a unital symmetric monoidal category, with tensor product given by
    \[
        (M,N)\mapsto M\otimes_R N
    \]
    and unit given by the object $R$.
\end{remark}

For unital ring extensions $R\ra R'$, the unit of $R$ induces the unit of $R'$. It turns out that despite not actually having a unit, quasi-unital rings have a similar property.
\begin{claim} \label{claim:ext_of_quasi_unital}
    Let $R$ be a bornological quasi-unital ring, and let $R\ra R'$ be an extension of $R$. Suppose that $R'$ is smooth as an $R$-bi-module. Then $R'$ is a quasi-unital ring.
    
    Moreover, if in the situation above $M$ is a bornological $R'$-module which is smooth as an $R$-module, then $M$ is also smooth as an $R'$-module.
\end{claim}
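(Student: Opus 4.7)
The plan is to exhibit $R'$ as a unital monoid in the symmetric monoidal category $\BMod^\text{sm}(R)$ (with tensor product $\otimes_R$ and unit $R$, see Remark~\ref{remark:bmod_is_unital}), and then reduce everything to the standard monoidal-categorical fact that for a unital monoid $A$ in a monoidal category, $A\otimes_A A\cong A$. The hypothesis that $R'$ is smooth as an $R$-bi-module places $R'$ inside this category. Its ring multiplication $R'\otimes_R R'\to R'$, together with the ring map $R\to R'$ playing the role of the unit, makes $R'$ into a unital monoid there: the unit axioms reduce to the statements that the actions $R\otimes_R R'\to R'$ and $R'\otimes_R R\to R'$ are the canonical isomorphisms supplied by two-sided smoothness of $R'$.

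Applying the monoidal fact yields $R'\otimes_{R'} R'\cong R'$ as computed in $\BMod^\text{sm}(R)$. I would then transfer this isomorphism to $\Born$: the inclusion $\BMod^\text{sm}(R)\hookrightarrow\BMod(R)$ is a left adjoint (its right adjoint is the bi-module smoothening $M\mapsto R\otimes_R M\otimes_R R$, analogous to Definition~\ref{def:smoothening_roughening}) and hence preserves colimits, and $\BMod(R)\to\Born$ also preserves colimits. Moreover, the $\Born$-coequalizer defining $R'\otimes_{R'} R'$ factors through $R'\otimes_R R'$, since $R'$-balance subsumes $R$-balance, so the two coequalizer descriptions coincide. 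This gives the underlying bornological isomorphism $R'\otimes_{R'} R'\cong R'$.

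The hard part is upgrading this to the locally split form of quasi-unitality from Definition~\ref{def:locally_split_exact}. Exactness at $R'$, i.e.~locally split surjectivity of the multiplication $R'\otimes R'\to R'$, is easy: for each bounded $T\subseteq R'$ take the smoothness section $s_T\co R'_T\to R\otimes R'$ of the action (provided by left smoothness of $R'$ over $R$) and post-compose with $R\hookrightarrow R'$ on the first factor. For exactness at $R'\otimes R'$, given a bounded $x\in\ker(R'\otimes R'\to R')$, I would form $y_0:=(s_T\otimes\text{id})(x)\in R\otimes R'\otimes R'$ for a suitable $T$; a direct computation shows $\mu_{12}(y_0)=x$ while $w:=\mu_{23}(y_0)\in R\otimes R'$ lies in the kernel of the $R$-action on $R'$ (by associativity combined with the vanishing of the multiplication on $x$). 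Using locally split exactness of the $R$-bar complex for $R'$ at the position $R\otimes R'$, I can lift $w$ to some $z\in R\otimes R\otimes R'$ with $d_R(z)=w$, and then $y_0+z\in(R')^{\otimes 3}$ satisfies $d(y_0+z)=x$.

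For the moreover part, the same strategy applies with $M$ in place of the right factor of $R'$ throughout. The smoothness of $M$ as a left $R$-module makes $M$ a module over the unital monoid $R'$ inside the module category $\Mod^\text{sm}(R)$ over $\BMod^\text{sm}(R)$, and the unit-axiom computation yields $R'\otimes_{R'} M\cong M$. Local sections are constructed analogously, using the smoothness section for $M$ over $R$ in place of the right-smoothness section of $R'$. The principal technical obstacle throughout is in the third paragraph: the abstract monoidal isomorphism comes for free, but converting it into explicit bounded local sections requires carefully combining several smoothness-derived sections in the correct order.
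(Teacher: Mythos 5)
Your proposal is correct and its essential content --- the explicit local sections of your third paragraph, namely $s_T\otimes\id$ followed by the correction obtained by feeding $(\id\otimes m)(y_0)\in\ker(a)$ into the degree-one section coming from left $R$-smoothness of $R'$ --- is exactly the construction in the paper's proof (your sign on the correction term is in fact the consistent one). The abstract monoidal argument of your first two paragraphs is the same ``philosophical'' explanation the paper states and then declines to formalize, and it is logically redundant in your write-up: since locally split exactness of the bar complex at $R'$ and $R'\otimes R'$ is itself the definition of quasi-unitality used here, the sections you build in paragraph three already prove the claim without any appeal to the isomorphism $R'\otimes_{R'}R'\cong R'$ in $\BMod^{\text{sm}}(R)$.
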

\begin{remark} \label{remark:quasi_unital_is_transitive}
    Claim~\ref{claim:ext_of_quasi_unital} allows us to extend Example~\ref{example:smooth_compact_funcs_are_quasi_unital} to many more naturally occurring objects in representation theory. For example, the space of Schwartz functions $S(G)$ on a real Lie group $G$ is a smooth $G$-bi-module, thus a smooth $C_c^\infty(G)$-bi-module by the main result of this text (Theorem~\ref{thm:garding_is_smooth}), and hence a quasi-unital ring itself by Claim~\ref{claim:ext_of_quasi_unital}.
\end{remark}

Because its proof is technically involved, and not particularly informative, we will postpone the proof of Claim~\ref{claim:ext_of_quasi_unital} to Appendix~\ref{appendix:ext_of_quasi_unital}.

\section{The G\r{a}rding Functor} \label{sect:garding_functor}

Let $G$ be a real Lie group. Our immediate goal is to define a notion of \emph{smoothness} for bornological $G$-modules. We will later use this notion, both to define various spaces of smooth functions with compact support, as well as state our main theorem. The notion we describe here is equivalent to that of \cite{borno_quasi_unital_algs2}, and should not be considered original.

Our approach to smooth functions is via defining a \emph{G\r{a}rding functor}, which takes a space of functions into the space of smooth functions inside it. Thus, for example, the space $C_c^\infty(G)$ of smooth functions with compact support will be the result of applying the G\r{a}rding functor to the space of continuous functions with compact support (see also Construction~\ref{const:C_c_infty}).

Let us begin by defining the G\r{a}rding functor, and proving its basic properties. We will denote by $\mathfrak{g}$ the Lie algebra of $G$, and by $U(\mathfrak{g})$ its universal enveloping algebra.

\begin{definition}
    A \emph{smoothness bound} for $G$ is a norm on the vector space $U(\mathfrak{g})$:
    \[
        \lambda\co U(\mathfrak{g})\ra\RR_{>0}.
    \]
\end{definition}
Note that we do not ask $\lambda$ to have any special properties with respect to the multiplicative structure on $U(\mathfrak{g})$. Intuitively, a smoothness bound tells us ``how smooth'' a specific function is -- how large are each of its derivatives.

The collection of smoothness bounds for $G$ has a partial order, going from from smoothest to least smooth:
\begin{definition}
    We say that a smoothness bound $\lambda_1$ is \emph{smoother than} the smoothness bound $\lambda_2$, and write $\lambda_1\leq\lambda_2$, if and only if $\norm{D}_{\lambda_1}\leq \norm{D}_{\lambda_2}$ for all $D\in U(\mathfrak{g})$.
\end{definition}
Note that this partial order is filtered.

We can now define the G\r{a}rding functor, intuitively given by the space of all vectors whose derivatives are bounded by some smoothness bound:
\begin{construction}
    Let $V$ be a complete bornological $G$-module, and let $\lambda$ be a smoothness bound. We will construct (functorially in $V$ and $\lambda$) a complete bornological space $V^{(\lambda)}$, along with an injective map of bornological spaces $V^{(\lambda)}\ra V$. The \emph{G\r{a}rding space} of $V$ will be the bornological $G$-module
    \[
        V^{(\infty)}=\colim_{\lambda} V^{(\lambda)},
    \]
    along with the natural map $V^{(\infty)}\ra V$. Here, the colimit goes over the filtered poset of all smoothness bounds for $G$.
    
    To construct $V^{(\lambda)}$, we will do the following. For each absolutely convex bounded subset $T\subseteq V$, we will construct a bornological space $V_T^{(\lambda)}$, along with an injective map $V_T^{(\lambda)}\ra V_T$. We will let
    \[
        V^{(\lambda)}=\colim_T V_T^{(\lambda)}.
    \]
    
    So, let $T\subseteq V$ be an absolutely convex bounded subset. We define $V_T^{(\lambda)}$ to be the span of the convex subset of all vectors $v\in V_T$ such that the derivatives $D\cdot v$ are well-defined with respect to the semi-norm of $V_T$ for all $D\in U(\mathfrak{g})$, and satisfy
    \[
        \norm{D\cdot v}_T\leq\norm{D}_\lambda.
    \]
    Note that, in particular, $\norm{v}_T\leq\norm{1}_\lambda<\infty$.
    
    Finally, we give $V_T^{(\lambda)}$ the semi-norm corresponding to the above convex subset.
\end{construction}

\begin{remark}
    One can build a similar construction for an algebraic group over a local field $G=G(F_v)$, or over an adelic group $G(\AA)$. This can be done by adding to the smoothness bound $\lambda$ the data of a compact open subset $K\subseteq G$, and requiring that the vectors of $V^{(\lambda,K)}$ also be invariant under $K$. We leave the details to the reader.
\end{remark}

\begin{claim} \label{claim:garding_nice_to_smooth}
    Let $V$ be a complete bornological $G$-module, and denote $W=V^{(\infty)}$. Then the natural map $W^{(\infty)}\ra W$ is an isomorphism.
\end{claim}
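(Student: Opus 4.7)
My plan is to reduce both surjectivity and the bornological identification to a single claim: every absolutely convex bounded subset $B\subseteq W$ is also bounded in $W^{(\infty)}$. Injectivity of $W^{(\infty)}\to W$ is automatic since each $W_S^{(\mu)}$ embeds in $W_S\subseteq W$; and taking $B=\{w\}$ recovers surjectivity from the claim. Since the bornology of $W=V^{(\infty)}$ is generated by the unit balls of the spaces $V_T^{(\lambda)}$, I may restrict to the case where $B$ lies in the unit ball of some fixed $V_T^{(\lambda)}$, i.e., each $v\in B$ satisfies $\|Dv\|_T\leq\|D\|_\lambda$ for all $D\in U(\mathfrak{g})$.

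The main preparatory step would be to enlarge $\lambda$ to a norm $\tilde\lambda$ on $U(\mathfrak{g})$ satisfying both the pointwise domination $\tilde\lambda(D)\geq\|D\|_\lambda$ and the bilinear estimate $\|ED\|_\lambda\leq\tilde\lambda(E)\tilde\lambda(D)$ for all $E,D$. I would construct $\tilde\lambda$ by fixing a PBW basis $\{X^\alpha\}$ of $U(\mathfrak{g})$ arising from a basis of $\mathfrak{g}$, writing $|\alpha|$ for the total order, and setting $\tilde\lambda(\sum_\alpha c_\alpha X^\alpha)=\sum_\alpha|c_\alpha|r_{|\alpha|}$ for positive weights $r_n$ chosen inductively. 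At step $n$ only finitely many multi-indices $\alpha$ satisfy $|\alpha|=n$ and only finitely many products $X^\alpha X^\beta$ with $\max(|\alpha|,|\beta|)=n$ are new, so $r_n$ may be taken large enough to enforce $r_n\geq\|X^\alpha\|_\lambda$ for all $|\alpha|=n$ and $\|X^\alpha X^\beta\|_\lambda\leq r_{|\alpha|}r_{|\beta|}$ for all newly relevant pairs; expanding arbitrary elements in the PBW basis then yields both desired properties of $\tilde\lambda$.

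With $\tilde\lambda$ in hand, I would define $S$ to be the absolutely convex hull in $V$ of the set $\{Dv/\tilde\lambda(D):v\in B,\,D\in U(\mathfrak{g})\setminus\{0\}\}$. For each generator and every $E\in U(\mathfrak{g})$, the bilinear estimate and the assumption on $B$ give
\[
    \left\|E\cdot\frac{Dv}{\tilde\lambda(D)}\right\|_T=\frac{\|(ED)v\|_T}{\tilde\lambda(D)}\leq\frac{\|ED\|_\lambda}{\tilde\lambda(D)}\leq\tilde\lambda(E),
\]
so every generator of $S$ lies in the unit ball of $V_T^{(\tilde\lambda)}$, and hence so does $S$. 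In particular $S$ is bounded in $W$. Taking $D=1$ then shows $v/\tilde\lambda(1)\in S$ for every $v\in B$, and more generally $Dv/\tilde\lambda(D)\in S$ gives $\|Dv\|_{W_S}\leq\tilde\lambda(D)$ for all $D$, placing every $v\in B$ in the unit ball of $W_S^{(\tilde\lambda)}$. This exhibits $B$ as bounded in $W^{(\tilde\lambda)}\subseteq W^{(\infty)}$, completing the argument.

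The main technical obstacle is the construction of $\tilde\lambda$: without the bilinear bound, the derivatives $Dv$ of smooth vectors in $W$ cannot be captured inside any single bounded subset $S\subseteq W$, and it is precisely the filtration of $U(\mathfrak{g})$ by finite-dimensional subspaces that makes the inductive choice of the weights $r_n$ go through.
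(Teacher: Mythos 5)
Your proposal is correct and follows essentially the same route as the paper: the heart of both arguments is the construction of an enlarged norm on $U(\mathfrak{g})$ (your $\tilde\lambda$, the paper's $\lambda'$) that dominates $\lambda$ and satisfies the submultiplicative estimate $\norm{ED}_\lambda\leq\tilde\lambda(E)\tilde\lambda(D)$, built as a weighted $\ell^1$-norm on a PBW/monomial basis with degree-dependent weights chosen inductively. Your explicit bounded set $S$ and the verification that $B$ lands in the unit ball of $W_S^{(\tilde\lambda)}$ just spell out the bornological bookkeeping that the paper compresses into the assertion $V^{(\lambda)}\subseteq\bigl(V^{(\lambda')}\bigr)^{(\lambda')}$.
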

\begin{proof}
    Let $\lambda$ be a smoothness bound. It is enough to show that there exists a smoothness bound $\lambda'$ such that $V^{(\lambda)}$ is contained in
    \[
        \left(V^{(\lambda')}\right)^{(\lambda')}.
    \]
    Indeed, we see that it is sufficient to ask for
    \[
        \norm{D\cdot D'}_\lambda\leq\norm{D}_{\lambda'}\norm{D'}_{\lambda'}
    \]
    for all $D,D'\in U(\mathfrak{g})$.
    
    The simplest way to show that such a $\lambda'$ necessarily exists is as follows. Pick a basis $\{g_i\}$ for $\mathfrak{g}$, and use it to write a monomial basis for $U(\mathfrak{g})$. Now, there is a sequence $\{A_d\}$ of positive real numbers such that:
    \[
        \norm{D}_{\lambda}\leq A_{d},
    \]
    for all $D$ in the monomial basis of degree $\leq d$. We may suppose without loss of generality that the sequence $\{A_d\}$ is monotone increasing, and $A_0\geq 1$.
    
    Furthermore, there is some sequence $\{B_d\}$ of positive real numbers, depending only on the size of the coefficients of the decomposition of elements of the form $[g_i,g_j]$ into the basis $\{g_k\}$, such that the following holds. For any two elements $D,D'$ of the monomial basis, each of degree $\leq d$,
    \[
        \norm{D\cdot D'}_{\lambda}\leq B_d A_{2d}.
    \]
    Without loss of generality, we also assume that the sequence $\{B_d\}$ is monotone increasing, and $B_0\geq 1$.
    
    We set $C_d=B_d A_{2d}$, and define:
    \[
        \norm{\sum\alpha_i D_i}_{\lambda'}=\sum C_{\deg{D_i}}\abs{\alpha_i},
    \]
    where the $D_i$ are elements of the monomial basis, with degrees $\deg{D_i}$. It is clear that the smoothness bound $\lambda'$ does the trick.
\end{proof}

Let $\Mod^\comp(G)$ be the category of complete bornological $G$-modules. By Claim~\ref{claim:garding_nice_to_smooth}, we conclude that the G\r{a}rding functor defines a right adjoint to the embedding of its essential image in $\Mod^\comp(G)$. Thus, we define:
\begin{definition} \label{def:smooth_G_module}
    We let $\Mod^{\comp,\sm}(G)\subseteq\Mod^\comp(G)$ be the full subcategory given by the essential image of the G\r{a}rding functor. We will refer to an object $V\in\Mod^{\comp,\sm}(G)$ as a \emph{smooth $G$-module}.
\end{definition}

\begin{remark}
    It is easy to see that this notion of smoothness of $G$-modules coincides with that of \cite{borno_quasi_unital_algs2}, and the G\r{a}rding functor coincides with the smoothening functor of $G$-modules of \cite{borno_quasi_unital_algs2}.
\end{remark}

This lets us define some spaces of smooth functions:
\begin{construction} \label{const:C_c_infty}
    We denote by $C_c^\infty(G)$ the result of applying the G\r{a}rding functor to the complete bornological left $G$-module $L^\infty_c(G)$. Here, $L^\infty_c(G)$ is the colimit
    \[
        L^\infty_c(G)=\colim_{K\subseteq G}L_K^\infty(G)
    \]
    of the $L^\infty$ function spaces supported in $K$, as $K$ goes over all compact subsets of $G$.
\end{construction}

\begin{remark}
    The space $C_c^\infty(G)$ admits $G$-actions from both left and right. By definition, it is smooth from the left. However, it is also smooth \emph{from the right}.
\end{remark}

\begin{remark}
    It is well known that $C_c^\infty(G)$ is a (non-unital) ring, with multiplication given by the convolution product. In Section~\ref{sect:dixmier_malliavin}, we will show that $C_c^\infty(G)$ is in fact a \emph{quasi-unital} ring, i.e. that
    \[
        C_c^\infty(G)\otimes_{C_c^\infty(G)}C_c^\infty(G)\xrightarrow{\sim}C_c^\infty(G),
    \]
    as a remnant of the G\r{a}rding construction.
\end{remark}

\section{Proof of Dixmier-Malliavin} \label{sect:dixmier_malliavin}

Recall the notions of a quasi-unital ring and a smooth module for such a ring from Section~\ref{sect:quasi_unital}. Our main theorem for this section is:
\begin{theorem} \label{thm:garding_is_smooth}
    The ring $C_c^\infty(G)$ is quasi-unital.
    
    Moreover, let $V$ be a complete bornological $G$-module. Then $V^{(\infty)}$ is a smooth $C_c^\infty(G)$-module.
\end{theorem}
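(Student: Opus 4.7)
The first clause follows from the moreover clause applied to $V = L_c^\infty(G)$: by Construction~\ref{const:C_c_infty} we have $C_c^\infty(G) = L_c^\infty(G)^{(\infty)}$, so smoothness of every G\r{a}rding space as a $C_c^\infty(G)$-module specialises to the quasi-unitality of $C_c^\infty(G)$. Accordingly I focus on the moreover clause; write $R := C_c^\infty(G)$ and $W := V^{(\infty)}$. The task is to show that the complex
\[
    R \otimes R \otimes W \xrightarrow{\,d_0 - d_1\,} R \otimes W \xrightarrow{\,d\,} W \to 0
\]
is locally split exact at $W$ and at $R \otimes W$ in the sense of Definition~\ref{def:locally_split_exact}. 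Since $W = \colim_{\lambda, T_0} V^{(\lambda)}_{T_0}$, the required sections need only be constructed on each piece $V^{(\lambda)}_{T_0}$ of the G\r{a}rding filtration.

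The core analytic ingredient, and the main obstacle, is a bornological enhancement of the classical Dixmier-Malliavin theorem: for every smoothness bound $\lambda$ and every absolutely convex bounded $T_0 \subseteq V$, one should produce a smoother bound $\lambda'$, a bounded $T_0' \subseteq V$, finitely many $\phi_1, \dots, \phi_N \in R$, and \emph{bounded linear} maps $u_i : V^{(\lambda)}_{T_0} \to V^{(\lambda')}_{T_0'}$ such that
\[
    v = \sum_{i=1}^N \pi(\phi_i)\, u_i(v) \qquad \text{for all } v \in V^{(\lambda)}_{T_0}.
\]
Granted the lemma, $\sigma(v) := \sum_i \phi_i \otimes u_i(v)$ is a bounded linear section of $d$, delivering local splitness at $W$ at once. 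Following the classical DM strategy, I would localise around $1 \in G$ via the exponential chart to reduce the question to a convolution decomposition in $\mathbb{R}^n$, and then apply the DM trick of writing a mollifier as a finite sum $\sum_i \alpha_i \ast \beta_i$ of compactly supported smooth convolutions, manufacturing the $\beta_i$ via the Borel lemma so that their jet data at the origin absorb the growth encoded by $\lambda$. The crucial bornological novelty is that the $u_i$ must be genuinely \emph{linear}, uniformly bounded from $V^{(\lambda)}_{T_0}$ into $V^{(\lambda')}_{T_0'}$; this comes out of tracking how the defining estimate $\|D \cdot v\|_{T_0} \le \|D\|_\lambda$ of the G\r{a}rding piece propagates through each step of the DM construction, and it is what dictates the quantitative smoothness loss $\lambda \rightsquigarrow \lambda'$.

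For local splitness at $R \otimes W$, given a bounded cycle $x = \sum \psi_i \otimes w_i \in \ker(d)$, form $y := \sum \psi_i \otimes \sigma(w_i) \in R^{\otimes 2} \otimes W$. One computes $d_1(y) = x$ and $d_0(y) = x' := \sum_{i,j}(\psi_i \ast \phi_j) \otimes u_j(w_i) \in \ker(d)$, so $(d_0 - d_1)(y) = x' - x$ and the problem reduces to lifting the residual cycle $x'$, whose right tensor factors now lie in the strictly smoother subspace $V^{(\lambda')}_{T_0'}$. To close the argument I would alternate this construction with the section $\sigma_R : R \to R \otimes R$ of the multiplication map, which is itself delivered by applying the key lemma to $V = L_c^\infty(G)$ and hence available without circularity, producing a two-step contracting homotopy that absorbs $x'$ into a boundary via the simplicial identities of the bar complex. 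Once the bornological Dixmier-Malliavin lemma is in hand, this second homological step is essentially bookkeeping, with the genuine difficulty concentrated in the analytic lemma of the previous paragraph.
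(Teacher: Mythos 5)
Your reduction of quasi-unitality to the module statement (take $V=L_c^\infty(G)$) and your key analytic lemma are exactly the paper's strategy: the paper likewise rests everything on a lift $V^{(\lambda)}\to C_c^\infty(G)\otimes V$ of the inclusion $V^{(\lambda)}\to V$ through the action map, obtained by reformulating the classical Dixmier--Malliavin argument, and it defers the analytic construction to the cited reference just as you sketch it. So the first half of your proposal is sound and on-model.

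The gap is in the homological step, i.e.\ exactness at $R\otimes W$. The property of the lift that makes this step work is not just boundedness and linearity of the $u_i$ but \emph{functoriality in the smooth $G$-module $V$}: the section $\sigma_V\colon V^{(\lambda)}\to R\otimes V$ is natural in $V$, so applying it to the smooth $G$-module $R^{\otimes n}\otimes W$ (with $G$ acting by left translation on the leftmost factor) produces an extra degeneracy for the whole augmented bar complex, and all the required local splittings follow at once. Concretely, for $x=\sum_i\psi_i\otimes w_i\in\ker(d)$ one sets $y'=\sum_{i,j}\phi_j\otimes u_j(\psi_i)\otimes w_i$, where the $u_j$ are computed in the left regular representation; then $d_0(y')=x$ because $\sigma$ is a section, and $d_1(y')=\sigma_W(d(x))=0$ \emph{precisely because of naturality} applied to the $G$-module map $d\colon R\otimes W\to W$. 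Your version instead applies $\sigma$ to the right-hand factor, which produces the residual cycle $x'=\sum_{i,j}(\psi_i*\phi_j)\otimes u_j(w_i)$; iterating on the right never terminates (it only makes the right factors smoother, which is not what is needed), and the proposed ``alternation with $\sigma_R$'' runs into the same problem: for $y''=\sum\sigma_R(\psi_i*\phi_j)\otimes u_j(w_i)$ one gets $d_0(y'')=x'$, but $d_1(y'')$ vanishes on $\ker(d)$ only if $\sigma_R$ is the \emph{natural} section and one invokes naturality against the action map --- the very ingredient your statement of the key lemma omits. So the ``bookkeeping'' you defer is actually the crux: you must strengthen the lemma to assert that the $u_i$ are built functorially from the $G$-action (in Dixmier--Malliavin they are of the form $v\mapsto\pi(D)v$ for suitable $D$, so this is true), and then the clean one-step extra-degeneracy argument replaces your two-step homotopy entirely.
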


\begin{remark}
    In Theorem~\ref{thm:garding_is_smooth} above, the bornological vector space $V^{(\infty)}$ is a-priori a $G$-module rather than a $C_c^\infty(G)$-module. However, it is a standard result that the action of $G$ on a complete smooth $G$-module can be integrated to an action of $C_c^\infty(G)$. See, for example, the equivalence of \cite{borno_quasi_unital_algs2}.
\end{remark}

\begin{remark} \label{remark:adelic_garding_is_smooth}
    A variant of Theorem~\ref{thm:garding_is_smooth} applies to adelic groups $G=G(\AA)$, with the G\r{a}rding functor appropriately modified. A similar statement is true for groups over non-Archimedean local fields.
\end{remark}

We can also reformulate Theorem~\ref{thm:garding_is_smooth} as:
\begin{corollary}
    Let $V$ be a complete smooth $G$-module. Then $V$ is a smooth $C_c^\infty(G)$-module.
\end{corollary}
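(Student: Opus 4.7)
The plan is to show that this corollary is essentially an immediate consequence of Theorem~\ref{thm:garding_is_smooth} combined with Claim~\ref{claim:garding_nice_to_smooth}, so my job is really just to unwind definitions correctly. First, I would invoke Definition~\ref{def:smooth_G_module}: saying that $V$ is a (complete) smooth $G$-module means precisely that $V$ lies in the essential image of the G\r{a}rding functor, so there exists a complete bornological $G$-module $W$ and an isomorphism $V\cong W^{(\infty)}$ of bornological $G$-modules.

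Next, I would apply Claim~\ref{claim:garding_nice_to_smooth} to $W$: the natural map $(W^{(\infty)})^{(\infty)}\ra W^{(\infty)}$ is an isomorphism, which after transporting along $V\cong W^{(\infty)}$ says that the canonical map $V^{(\infty)}\ra V$ is itself an isomorphism. Thus the complete smooth $G$-module $V$ is canonically identified with its own G\r{a}rding space.

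Now I would invoke the second half of Theorem~\ref{thm:garding_is_smooth}, applied to $V$ regarded simply as a complete bornological $G$-module: it asserts that $V^{(\infty)}$ carries the structure of a smooth $C_c^\infty(G)$-module. Combining this with the isomorphism $V^{(\infty)}\cong V$ of the previous step transfers this structure to $V$ itself, yielding the claim.

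There is essentially no obstacle to overcome; the only point worth checking is that the $C_c^\infty(G)$-action we obtain on $V$ from the isomorphism $V\cong V^{(\infty)}$ agrees with the one obtained by directly integrating the given smooth $G$-action on $V$ (as mentioned in the remark following Theorem~\ref{thm:garding_is_smooth}). Both actions are produced from the same underlying $G$-action via the universal property of the G\r{a}rding/integration construction, so they must coincide. All the real work has already been carried out inside Theorem~\ref{thm:garding_is_smooth}.
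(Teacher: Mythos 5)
Your proof is correct and matches the paper's intent exactly: the paper presents this corollary as a direct reformulation of Theorem~\ref{thm:garding_is_smooth}, relying on precisely the identification $V^{(\infty)}\xrightarrow{\sim}V$ for smooth $G$-modules (which follows from Definition~\ref{def:smooth_G_module} together with Claim~\ref{claim:garding_nice_to_smooth}) that you spell out. Nothing further is needed.
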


Let us postpone the proof of Theorem~\ref{thm:garding_is_smooth} to the end of this subsection, in order to show some interesting corollaries:

\begin{corollary} \label{cor:comp_of_smooth_is_smooth}
    Let $M$ be a smooth $C_c^\infty(G)$-module. Then the completion $\widehat{M}$ is also smooth.
\end{corollary}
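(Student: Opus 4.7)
The plan is to reduce the statement to Theorem~\ref{thm:garding_is_smooth} by showing that $\widehat{M}$ coincides with its own G{\aa}rding space $\widehat{M}^{(\infty)}$. Once this equality is established, applying Theorem~\ref{thm:garding_is_smooth} with $V=\widehat{M}$ immediately yields that $\widehat{M}$ is a smooth $C_c^\infty(G)$-module.

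First, I would observe that $\widehat{M}$ inherits the structure of a complete bornological $G$-module from $M$. Indeed, smoothness of $M$ as a $C_c^\infty(G)$-module allows every element to be written as a finite sum $m=\sum_i r_i\cdot m_i$, giving an action $g\cdot m=\sum_i (g\cdot r_i)\cdot m_i$ by bounded maps on $M$, which then extends uniquely to $\widehat{M}$ by the universal property of completion.

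The main step is to verify that $\widehat{M}$ is essential as a $C_c^\infty(G)$-module in Meyer's sense, i.e., that the canonical map
\[
    C_c^\infty(G)\widehat{\otimes}_{C_c^\infty(G)}\widehat{M}\xrightarrow{\sim}\widehat{M}
\]
is an isomorphism. Since $M$ is smooth, we begin from the isomorphism $C_c^\infty(G)\otimes_{C_c^\infty(G)}M\xrightarrow{\sim}M$ supplied by Definition~\ref{def:born_smooth_modules}. Applying the completion functor, which is a left adjoint and therefore commutes with the coequalizer defining the relative tensor product, and using that $C_c^\infty(G)$ is already complete so that $\widehat{C_c^\infty(G)\otimes N}=C_c^\infty(G)\widehat{\otimes}\widehat{N}$, produces the desired isomorphism.

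By Meyer's equivalence between essential $C_c^\infty(G)$-modules and smooth $G$-modules \cite{borno_quasi_unital_algs2}, essentiality of $\widehat{M}$ forces $\widehat{M}=\widehat{M}^{(\infty)}$. Theorem~\ref{thm:garding_is_smooth} applied with $V=\widehat{M}$ then concludes the proof. The main obstacle is the bookkeeping in the central step: identifying $\widehat{C_c^\infty(G)\otimes_{C_c^\infty(G)}M}$ with $C_c^\infty(G)\widehat{\otimes}_{C_c^\infty(G)}\widehat{M}$. Although this follows formally from the fact that completion preserves colimits, it requires careful tracking of the transition between the incomplete tensor product appearing in Definition~\ref{def:born_smooth_modules} and the completed tensor product appearing in Meyer's definition of essentiality.
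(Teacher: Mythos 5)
Your proposal is correct and follows essentially the same route as the paper: the paper's entire proof is the observation that $\widehat{M}^{(\infty)}\to\widehat{M}$ is an equivalence followed by an appeal to Theorem~\ref{thm:garding_is_smooth}. The middle steps you supply (completion commutes with the coequalizer defining $\otimes_{C_c^\infty(G)}$, hence $\widehat{M}$ is essential, hence a smooth $G$-module) are exactly the justification the paper leaves implicit, and they match how the paper argues the implication from essentiality to smoothness in Corollary~\ref{cor:loc_split_exact_auto}.
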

\begin{proof}
    We note that $\widehat{M}^{(\infty)}\ra\widehat{M}$ is an equivalence. Thus Theorem~\ref{thm:garding_is_smooth} implies the desired result.
\end{proof}

\begin{corollary} \label{cor:subspace_of_smooth_smooth}
    Let $M$ be a complete smooth $C_c^\infty(G)$-module, and let $N\subseteq M$ be a closed $C_c^\infty(G)$-submodule, with the induced bornology. Then $N$ is also smooth.
\end{corollary}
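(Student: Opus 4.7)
The plan is to show that $N$, viewed as a complete bornological $G$-module in its own right, coincides with its own G\r{a}rding space $N^{(\infty)}$. Granting this, Theorem~\ref{thm:garding_is_smooth} applied directly to $N$ identifies $N=N^{(\infty)}$ as a smooth $C_c^\infty(G)$-module. This reduces the problem to two substeps: verifying that $N$ is $G$-stable (so that the G\r{a}rding functor applies to it) and complete, and verifying that the canonical map $N^{(\infty)}\to N$ is an isomorphism of bornological vector spaces.

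I expect the first substep, $G$-stability of $N$, to be the main obstacle. Completeness of $N$ is automatic: a subspace of a complete bornological vector space is complete iff it is closed in the bornivorous topology, which is our hypothesis. For $G$-stability, the plan is an approximation argument: given $n\in N$ and $g\in G$, approximate $\delta_g$ by a net $\{\phi_\alpha\}\subseteq C_c^\infty(G)$ of non-negative functions with $\int\phi_\alpha=1$ and supports shrinking to $\{g\}$. Under the equivalence between complete smooth $G$-modules and smooth $C_c^\infty(G)$-modules (recalled after Theorem~\ref{thm:garding_is_smooth}), the $C_c^\infty(G)$-action on $M$ is the integrated action $\phi\cdot m=\int\phi(h)(h\cdot m)\,dh$. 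Smoothness of $n\in M=M^{(\infty)}$ then forces $\phi_\alpha\cdot n\to g\cdot n$ in the Mackey sense in $M$; since each $\phi_\alpha\cdot n\in N$ and $N$ is closed in the bornivorous topology, $g\cdot n\in N$.

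For the second substep, the map $N^{(\infty)}\to N$ is injective by construction, so it suffices to show the inverse is bounded. Take a bounded $U\subseteq N$; it is bounded in $M=M^{(\infty)}$, so there exist a smoothness bound $\lambda$, a bounded absolutely convex $T\subseteq M$, and a constant $C$ with $\norm{D\cdot u}_T\leq C\norm{D}_\lambda$ for all $u\in U$ and $D\in U(\mathfrak{g})$. By the first substep, the vectors $D\cdot u$ all lie in $N$, so we may form $\Sigma\subseteq N$, the absolutely convex hull of $\{D\cdot u/(C\norm{D}_\lambda):u\in U,\ D\in U(\mathfrak{g}),\ D\neq 0\}$. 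The estimate above places $\Sigma$ inside the $T$-unit ball in $M_T$, so $\Sigma$ is bounded in $M$ and therefore in $N$ for the subspace bornology. A direct check then gives $\norm{D\cdot u}_\Sigma\leq C\norm{D}_\lambda$ for all $u\in U$ and $D\in U(\mathfrak{g})$; that is, $U$ lies in the $C$-ball of $N_\Sigma^{(\lambda)}\subseteq N^{(\infty)}$, witnessing $U$ as bounded in $N^{(\infty)}$ and completing the proof.
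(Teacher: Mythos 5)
Your overall route is the same as the paper's: reduce to showing that the canonical map $N^{(\infty)}\ra N$ is an isomorphism and then invoke Theorem~\ref{thm:garding_is_smooth}. The paper does this in one line by asserting that the square formed by $N^{(\infty)}\ra N$ and $M^{(\infty)}\ra M$ is Cartesian; your two substeps are exactly an unpacking of that assertion, and you additionally make explicit the points the paper leaves implicit (that $N$ is $G$-stable, so that the G\r{a}rding functor may be applied to it, and that it is complete). The $G$-stability argument via an approximate identity and closedness of $N$ is fine.

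One step in your second substep is not quite ``a direct check.'' Membership of $u$ in (a multiple of the unit ball of) $N_\Sigma^{(\lambda)}$ requires, by the definition of the G\r{a}rding construction, not only the estimate $\norm{D\cdot u}_\Sigma\leq C\norm{D}_\lambda$ but also that the derivatives $D\cdot u$ be \emph{well-defined with respect to the semi-norm $\norm{\cdot}_\Sigma$}, i.e.\ that the difference quotients converge in $N_\Sigma$ and not merely in the coarser semi-normed space $M_T$. The usual remainder estimate for $\frac{1}{t}\left(\exp(tX)\cdot u-u\right)-X\cdot u$ is controlled by translates $\exp(sX)\cdot X^2\cdot u$ for small $s$, and these need not lie in the span of your $\Sigma$ (which contains only the derivatives themselves) with controlled norm. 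The fix is routine but should be said: enlarge $\Sigma$ to the absolutely convex hull of $\left\{g\cdot D\cdot u/(C'\norm{D}_\lambda)\right\}$ with $g$ ranging over a fixed compact neighborhood of the identity; these vectors still lie in $N$ by your $G$-stability step and form a bounded subset of $M$, and with this larger $\Sigma$ the convergence of the difference quotients in $N_\Sigma$ follows. With that repair the argument is complete and matches the paper's proof in substance.
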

\begin{proof}
    It suffices to show that $N^{(\infty)}\ra N$ is an equivalence. However, we see that the diagram
    \[\xymatrix{
        N^{(\infty)} \ar[r] \ar[d] & N \ar[d] \\
        M^{(\infty)} \ar[r] & M \\
    }\]
    is Cartesian. By Claim~\ref{claim:garding_nice_to_smooth}, we are done.
\end{proof}

\begin{remark}
    As shown by the embedding $tM_{[0,\infty)}\subseteq R$ whose cokernel was used in Example~\ref{example:non_smooth_module}, Corollary~\ref{cor:subspace_of_smooth_smooth} does not hold for general quasi-unital rings.
\end{remark}

As another corollary, we obtain that the subtlety we introduced into the definition of smooth modules in Subsection~\ref{subsect:quasi_unital_born} is not actually necessary for $C_c^\infty(G)$-modules. That is, exactness in the sense of Definition~\ref{def:locally_split_exact} is automatic.
\begin{corollary} \label{cor:loc_split_exact_auto}
    Let $M$ be a complete $C_c^\infty(G)$-module. Then the following are equivalent:
    \begin{enumerate}
        \item \label{item:M_is_smooth} The $C_c^\infty(G)$-module $M$ is smooth.
        \item \label{item:M_is_weak_smooth} The canonical morphism
        \[
            C_c^\infty(G)\otimes_{C_c^\infty(G)}M\ra M
        \]
        is an isomorphism of bornological vector spaces.
        \item \label{item:M_is_essential} The canonical morphism
        \[
            C_c^\infty(G)\widehat{\otimes}_{C_c^\infty(G)}M\ra M
        \]
        is an isomorphism of bornological vector spaces.
    \end{enumerate}
\end{corollary}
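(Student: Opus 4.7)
The plan is to establish the cycle of implications $(1) \Rightarrow (2) \Rightarrow (3) \Rightarrow (1)$, reserving Theorem~\ref{thm:garding_is_smooth} for the final step. The first two implications are routine. For $(1) \Rightarrow (2)$, observe that the locally split exactness at $M$ and at $C_c^\infty(G) \otimes M$ demanded by Definition~\ref{def:born_smooth_modules} forces, in particular, the coequalizer map $C_c^\infty(G) \otimes_{C_c^\infty(G)} M \to M$ to be an isomorphism of bornological vector spaces. For $(2) \Rightarrow (3)$, recall that $C_c^\infty(G) \widehat{\otimes}_{C_c^\infty(G)} M$ is by construction the completion of $C_c^\infty(G) \otimes_{C_c^\infty(G)} M$; since $(2)$ identifies the latter with $M$ and $M$ is complete by hypothesis, $(3)$ follows.

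The substance of the corollary is the implication $(3) \Rightarrow (1)$. Here I invoke Meyer's equivalence from \cite{borno_quasi_unital_algs2}: a complete $C_c^\infty(G)$-module $M$ satisfying $(3)$ is \emph{essential} in Meyer's sense, so under the equivalence $M$ corresponds to a complete smooth $G$-module whose underlying bornological space is $M$ itself. Viewing $M$ in this way as a complete bornological $G$-module, we have $M \cong M^{(\infty)}$. Theorem~\ref{thm:garding_is_smooth} applied with $V := M$ then asserts that $M^{(\infty)} = M$ is a smooth $C_c^\infty(G)$-module in the sense of Definition~\ref{def:born_smooth_modules}, which is exactly $(1)$.

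The main point requiring care is the compatibility of actions: the $C_c^\infty(G)$-module structure on $M^{(\infty)}$ furnished by Theorem~\ref{thm:garding_is_smooth} is obtained by integrating the $G$-action, and one must verify that, under the identification $M \cong M^{(\infty)}$ supplied by Meyer's equivalence, this reconstructed action agrees with the original $C_c^\infty(G)$-action we started with. This is precisely the content of Meyer's result being an equivalence of module categories rather than just of bornological vector spaces, and it amounts to a standard unwinding of the integration procedure that defines the passage from smooth $G$-modules to essential $C_c^\infty(G)$-modules.
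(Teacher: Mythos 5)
Your proposal is correct and follows essentially the same route as the paper: the implications $\ref{item:M_is_smooth}\implies\ref{item:M_is_weak_smooth}\implies\ref{item:M_is_essential}$ are dispatched as immediate, and $\ref{item:M_is_essential}\implies\ref{item:M_is_smooth}$ is obtained by observing that essentiality forces $M^{(\infty)}\xrightarrow{\sim}M$ and then invoking Theorem~\ref{thm:garding_is_smooth}. You merely make explicit (via Meyer's equivalence) two points the paper leaves implicit --- why essentiality implies $M$ is a smooth $G$-module, and why the integrated $C_c^\infty(G)$-action agrees with the original one --- which is a reasonable elaboration rather than a different argument.
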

\begin{proof}
    The implications $\ref{item:M_is_smooth}\implies\ref{item:M_is_weak_smooth}\implies\ref{item:M_is_essential}$ are immediate. Let us prove that $\ref{item:M_is_essential}\implies\ref{item:M_is_smooth}$. We are given the isomorphism
    \[
        C_c^\infty(G)\widehat{\otimes}_{C_c^\infty(G)}M\xrightarrow{\sim} M.
    \]
    This implies that $M$ is a smooth $G$-module, i.e. that $M^{(\infty)}\xrightarrow{\sim}M$. In particular, Theorem~\ref{thm:garding_is_smooth} now implies that $M$ is also a smooth $C_c^\infty(G)$-module.
\end{proof}

\begin{remark}
    Using Theorem~\ref{thm:garding_is_smooth} and Claim~\ref{claim:ext_of_quasi_unital}, it is possible to show that many standard spaces of functions (such as spaces of Schwartz functions) are also quasi-unital. See also Remark~\ref{remark:quasi_unital_is_transitive}.
\end{remark}

Finally, let us return to the proof of Theorem~\ref{thm:garding_is_smooth}.
\begin{proof}[Proof of Theorem~\ref{thm:garding_is_smooth}]
    We may suppose that $V^{(\infty)}\xrightarrow{\sim}V$. That is, we assume that $V$ is a smooth $G$-module.
    
    We will largely follow the ideas of \cite{schwartz_is_quasi_unital}. Reformulating the proof of the main theorem there, it supplies a lift
    \[\xymatrix{
        & V^{(\lambda)} \ar[d] \ar@{-->}[ld] \\
        C_c^\infty(G)\otimes V \ar[r] & V
    }\]
    for every smoothness bound $\lambda$ for $G$. Additionally, this lift is functorial in the smooth $G$-module $V$, albeit \emph{not} in the smoothness bound $\lambda$.
    
    Regardless, functoriality in $V$ means that this lift locally splits the augmented semi-simplicial object
    \[\xymatrix{
        \dots \ar@<3pt>[r] \ar[r] \ar@<-3pt>[r] & C_c^\infty(G)\otimes C_c^\infty(G)\otimes V \ar@<2pt>[r] \ar@<-2pt>[r] & C_c^\infty(G)\otimes V \ar[r] & V,
    }\]
    meaning that
    \[\xymatrix{
        C_c^\infty(G)\otimes_{C_c^\infty(G)}V \ar[r]^-\sim & V
    }\]
    is an isomorphism.
\end{proof}

\begin{appendices}
\section{Proof of Claim~\ref{claim:ext_of_quasi_unital}} \label{appendix:ext_of_quasi_unital}

The goal of this appendix is to prove Claim~\ref{claim:ext_of_quasi_unital}. Recall that we are trying to show that smooth extensions of quasi-unital rings are also quasi-unital.
\begin{proof}[Proof of Claim~\ref{claim:ext_of_quasi_unital}]
    The statements about algebras and their modules follow using almost identical arguments. Therefore, we will write down the proof that $R'$ is quasi-unital, and leave the smoothness of $M$ to the reader.
    
    Philosophically, the reason that $R'$ is quasi-unital is that $R'$ is a unital algebra object in the monoidal category $\BMod^\text{sm}(R)$ (see Remark~\ref{remark:bmod_is_unital}), and thus it is also quasi-unital by Example~\ref{example:unital_means_quasi_unital}. This argument can be formalized in a relatively clean way, but the use of non-trivial exact structures means that it requires the use of tools that are beyond the scope of this text.
    
    Therefore, we will directly exhibit this instead. We need to show that the complex
    \begin{equation} \label{eq:R_ext_complex} \xymatrix{
        R'\otimes R'\otimes R' \ar[rr]^-{m\otimes\id_{R'}-\id_{R'}\otimes m} & & R'\otimes R' \ar[r]^-{m} & R' \ar[r] & 0
    }\end{equation}
    is exact at $R'$ and $R'\otimes R'$. The first of these is easy: because $R'$ is smooth as a left $R$-module, there exist (locally on $R'$) sections into $R\otimes R'$. We compose these sections with the map $i\otimes\id_{R'}\co R\otimes R'\ra R'\otimes R'$ to show that the complex \eqref{eq:R_ext_complex} is exact at $R'$. Here, $i\co R\ra R'$ is the extension map.
    
    It remains to show exactness at $R'\otimes R'$. We want to define (locally) a section from the kernel of $m\co R'\otimes R'\ra R'$ to $R'\otimes R'\otimes R'$. In fact, we will show that it is possible to find a (local) section $\ker(m)\ra R'\otimes R'\otimes R'$ that factors through $i\otimes\id_{R'}\otimes\id_{R'}\co R\otimes R'\otimes R'\ra R'\otimes R'\otimes R'$. That is, we want a map (locally defined on the source)
    \[
        \eta\co\ker(m)\ra R\otimes R'\otimes R'
    \]
    whose composition with $(m\otimes\id_{R'}-\id_{R'}\otimes m)\circ(i\otimes\id_{R'}\otimes\id_{R'})$ is the identity.
    
    This is a more complicated diagram chase, although as usual for such chases given in printed form, the notation is probably more complicated than the chase itself. For the rest of this proof, we will use a dashed arrow ($\xymatrix@1{\ar@{-->}[r] &}$) to denote a map that is only defined locally. Then we have the following diagram:
    \begin{equation} \label{eq:mult_for_ring_ext} \xymatrix{
        R\otimes R'\otimes R' \ar[d]^{\id_R\otimes m} \ar[r]^-{a\otimes\id_{R'}} & R'\otimes R' \ar[d]^m \ar@{-->}@/_1.7pc/[l]_{s_0\otimes \id_{R'}}\\
        R\otimes R' \ar[r]^-a & R' \ar@{-->}@/^1.7pc/[l]_{s_0},
    }\end{equation}
    with $a\co R\otimes R'\ra R'$ being the action map. Locally on $R'$, the map $a$ has a section, which we will call $s_0$. The composition $(\id_R\otimes m)\circ (s\otimes\id_{R'})$ maps into $R\otimes R'$. If its image there were $0$, then $s_0\otimes\id_{R'}$ would work as the section $\eta$ we seek. However, we have no guarantee of that.
    
    Instead, we will need to ``correct'' $s_0\otimes\id_{R'}$ using the higher section from the smoothness of $R'$. Indeed, we note that by commutativity of the inner square of diagram~\eqref{eq:mult_for_ring_ext}, the counter-clockwise composition
    \[
        a\circ(\id_R\otimes m)\circ (s_0\otimes \id_{R'})
    \]
    is $m$. Thus, on the kernel of $m$, the map $(\id_R\otimes m)\circ (s_0\otimes \id_{R'})$ lies in the kernel of $a$, and therefore we can apply the section $s_1$ for 
    \begin{equation*} \xymatrix{
        R\otimes R\otimes R' \ar[r] & R\otimes R' \ar[r]^-{a} \ar@{-->}@/_1.7pc/[l]_{s_1} & R' \ar[r] \ar@{-->}@/_1.7pc/[l]_{s_0} & 0
    }\end{equation*}
    coming from the smoothness of $R'$. Our desired section is now
    \[
        \eta=\left.(s_0\otimes \id_{R'})\right|_{\ker(m)} - (\id_R\otimes i \otimes \id_{R'})\circ s_1\circ\left.(\id_R\otimes m)\circ (s_0\otimes \id_{R'})\right|_{\ker(m)},
    \]
    which maps the kernel of $m$ to $R\otimes R'\otimes R'$. Recall that $i\co R\ra R'$ is the extension map.
\end{proof}

\end{appendices}

\Urlmuskip=0mu plus 1mu\relax
\bibliographystyle{alphaurl}
\bibliography{L_func}

\end{document}